\numberwithin{equation}{section}
\newtheorem{Theorem}{Theorem}[section]
\newtheorem{Proposition}[Theorem]{Proposition}
 { \theoremstyle{definition}
\newtheorem{Definition}[Theorem]{Definition}
\newtheorem{Remark}[Theorem]{Remark} }
\begin{document}
\allowdisplaybreaks

\newcommand{\arXivNumber}{1811.01517}

\renewcommand{\PaperNumber}{022}

\FirstPageHeading

\ShortArticleName{On a Yang--Mills Type Functional}

\ArticleName{On a Yang--Mills Type Functional}

\Author{C\u{a}t\u{a}lin GHERGHE}

\AuthorNameForHeading{C.~Gherghe}

\Address{University of Bucharest, Faculty of Mathematics and Computer Science,\\ Academiei 14, Bucharest, Romania}
\Email{\href{mailto:catalinliviu.gherghe@gmail.com}{catalinliviu.gherghe@gmail.com}}

\ArticleDates{Received November 13, 2018, in final form February 27, 2019; Published online March 21, 2019}

\Abstract{We study a functional that derives from the classical Yang--Mills functional and Born--Infeld theory. We establish its first variation formula and prove the existence of critical points. We also obtain the second variation formula.}

\Keywords{curvature; vector bundle; Yang--Mills connections; variations}

\Classification{58E15; 81T13; 53C07}

\section{Motivations}

Let $u\colon \Omega\subset \mathbb{R}^n\rightarrow \mathbb{R}$ be a smooth function. Then the graph of $u$
 \begin{gather*}
G_u=\big\{(x,z)\in \mathbb{R}^{n+1}\,\vert\, z=u(x), \, x\in\Omega\big\},
\end{gather*}
is a minimal hypersurface if and only if satisfies the following differential equation
\begin{gather}\label{1}
\operatorname{div}\left(\frac{\nabla u}{\sqrt{1+\vert\nabla u\vert^2}}\right)=0.
\end{gather}

In 1970 Calabi, in a paper in which he studied examples of Bernstein problems, noticed that if $n=2$, $u$ is an $F$-harmonic map for the function $F(t)=\sqrt{1+2t}-1$. We recall that $u$ is an $F$-harmonic map if it is a critical point of the following functional
\begin{gather*}
E_F(u)=\int_{\mathbb{R}^2}F\left(\frac{\Vert du\Vert^2}{2}\right)\vartheta_g,
\end{gather*}
with respect to any compactly supported variation, $\Vert du\Vert^2$ being the Hilbert--Schmidt norm.

Following Calabi's ideas, Yang and then Sibner showed that for $n=3$, the equation (\ref{1}) is equivalent, over a simply connected domain, to the vector equation
\begin{gather*}
\nabla \times \left(\frac{\nabla\times A}{\sqrt{1+\vert\nabla\times A\vert^2}}\right)=0,
\end{gather*}
which arises in the nonlinear electromagnetic theory of Born and Infeld. Here $A$ is a vector field in $\mathbb{R}^3$ and $\nabla\times(\;\cdot\;)$ is the curl of $(\;\cdot\;)$. Born--Infeld theory is of contemporary interest due to its relevance in string theory.

This observation lead Yang to give a generalized treatment of equation (\ref{1}), expressed in terms of differential forms, as follows:
\begin{gather}\label{2}
\delta\left(\frac{d\omega}{\sqrt{1+\Vert d\omega\Vert^2}}\right)=0,
\end{gather}
for any $\omega\in A^p\big(\mathbb{R}^4\big)$. It is not very difficult to verify that the solution of equation~(\ref{2}) is a~critical point of the following integral
\begin{gather*}
\int_{\mathbb{R}^4}\left(\sqrt{1+\Vert d\omega \Vert^2}-1\right)\vartheta_g.
\end{gather*}

These facts give us the motivation to study a similar functional, namely the Yang--Mills--Born--Infeld functional
\begin{gather*}
{\rm YM}_{\rm BI}(D) = \int_M\Big(\sqrt{1+\big\Vert R^D\big\Vert^2}-1 \Big)\vartheta_g,
\end{gather*}
 defined more generally on Riemannian manifolds. The definition of the above functional is similar to the definition of the well-known Yang--Mills functional (see also \cite{Do}).

The paper is organized as follows. In Section~\ref{section1} we give some preliminaries and define the functional. In Section~\ref{section2} we derive its Euler--Lagrange equations and we obtain the main result of the paper (Theorem~\ref{theorem2}). In dimension $\geq 5$, we give criteria for which a metric is conformal to a metric with respect to which a $G$-connection is critical for ${\rm YM}_{\rm BI}$. Section~\ref{section3} is devoted to a~conservation law of the functional. Finally in Section~\ref{section4} we derive the second variation formula.

\section{The functional}\label{section1}
 Let $E$ be a smooth real vector bundle over a compact $n$-dimensional Riemannian manifold $(M^n,g)$, such that its structure group $G$ is a compact Lie subgroup of the orthogonal group~$O(n)$.

 For any vector bundle $F$ over $M$ we denote by $\Gamma(F)$ the space of smooth cross sections of $F$ and for each $p\ge 0$ we denote by $\Omega^p(F) = \Gamma(\Lambda^pT^*M\otimes F)$ the space of all smooth $p$-forms on~$M$ with values in~$F$. Note that $\Omega^0(F)=\Gamma(F)$.

 A connection $D$ on the vector bundle $E$ is defined by specifying a covariant derivative, that is a linear map
 \begin{gather*}
D\colon \ \Omega^0(E)\rightarrow \Omega^1(E),
\end{gather*}
such that $D(fs)=df\otimes s + fDs$, for any section $s\in \Omega^0(E)$ and any smooth function $f\in C^{\infty}(M)$.

A connection $D$ is called a $G$-connection if the natural extension of $D$ to tensor bundles of $E$ annihilates the tensors which define the $G$-structure. We denote by ${\cal C}(E)$ the space of all smooth $G$-connections~$D$ on~$E$.

 Given a connection on $E$, the map $D\colon \Omega^0(E)\rightarrow \Omega^1(E)$ can be extended to a generalized de Rham sequence
 \begin{gather*}
\Omega^0(E)\stackrel{d^D=D}{\longrightarrow} \Omega^1(E)\stackrel{d^D}{\longrightarrow} \Omega^2(E)\stackrel{d^D}{\longrightarrow}\cdots.
 \end{gather*}
For each $G$-connection $D$ of the vector bundle $E$, the curvature tensor of $D$, denoted by $R^D$, is determined by $\big(d^D\big)^2 \colon\Omega ^0(E)\rightarrow \Omega^2(E)$. If we suppose that $E$ carries an inner product compatible with~$G$, it is easy to see that $R^D\in \Omega^2(g_E)$, where $g_E\subset \operatorname{End}(E)$ is the subbundle of skew-symmetric endomorphisms of~$E$.

Given metrics on $M$ and $E$, there are naturally induced metrics on all associated bundles, such as $\Lambda^pT^*M\otimes \operatorname{End}(E)$:
\begin{gather*}
\langle\varphi , \psi \rangle_x= \sum_{1<i_1<\dots <i_p<n}\big\langle\varphi^t (e_{i_1}, \ldots,e_{i_p}), \psi( e_{i_1}, \ldots ,e_{i_p})\big\rangle,
\end{gather*}
 where, for any point $x\in M$, $\{e_i\}^n_{i=1}$ is an orthonormal basis of $T_xM$ with respect to the metric~$g$. The pointwise inner product gives an $L^2$-norm on $\Omega^p(E)$ by setting
\begin{gather*}
(\varphi, \psi)=\int_M\langle\varphi , \psi \rangle \vartheta_g.
\end{gather*}
With respect to this norm, the formal adjoint of $d^D$ it is denoted by $\delta^D$ (the coderivative) and satisfies
\begin{gather*}
\big(d^D\varphi, \psi\big)=\big(\varphi,\delta^D\psi\big).
\end{gather*}

In particular, for any $G$-connection $D$, the norm of the curvature $R^D$ is defined by
\begin{gather*}
\big\Vert R^D \big\Vert^2_x=\sum_{i<j}\big\Vert R^D_{e_i,e_j}\big\Vert^2_x,
\end{gather*}
for any point $x\in M$ and any orthonormal basis $\{e_i\}_{i=\overline{1,n}}$ on $T_xM$. The norm of $R^D_{e_i,e_j}$ is the usual one on $\operatorname{End}(E)$, namely
$\langle A, B \rangle = \frac12\operatorname{tr}\big(A^t\circ B\big)$.

 We are able to define the Yang--Mills--Born--Infeld functional ${\rm YM}_{\rm BI} \colon {\cal C}(E) \rightarrow \mathbb{R}$ (see also~\cite{Do}) by
 \begin{gather*}
{\rm YM}_{\rm BI}(D) = \int_M\left(\sqrt{1+\big\Vert R^D\big\Vert^2}-1 \right)\vartheta_g.
\end{gather*}

\section{The first variation formula. Existence result}\label{section2}

In the following we shall derive the Euler--Lagrange equations of the functional ${\rm YM}_{\rm BI}$. These equations were also obtained in \cite{Do} for the $F$-Yang--Mills functional.

 \begin{Theorem}\label{theorem1}The first variation formula of the functional ${\rm YM}_{\rm BI}$ is given by
 \begin{gather*}
\frac {d}{dt}\Bigg\vert_{t=0}{\rm YM}_{\rm BI}\big(D^t\big) = \int_M \left\langle B,\delta^D\left(\frac{1}{\sqrt{1+\big\Vert R^D\big\Vert^2}}R^D\right)\right\rangle\vartheta_g,
\end{gather*}
 where
 \begin{gather*}
B = \frac d{dt}\bigg\vert_{t=0}D^t.
\end{gather*}
Consequently, $D$ is a critical point of ${\rm YM}_{\rm BI}$ if and only if
 \begin{gather*}
\delta^D\left(\frac{1}{\sqrt{1+\big\Vert R^D\big\Vert^2}}R^D\right)=0,
\end{gather*}
which are the Euler--Lagrange equations of ${\rm YM}_{\rm BI}$.
 \end{Theorem}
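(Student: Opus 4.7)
The plan is to work with a smooth one-parameter family $D^t$ of $G$-connections with $D^0=D$. Since $\mathcal{C}(E)$ is an affine space modeled on $\Omega^1(g_E)$, we may write $D^t=D+tB+o(t)$ with $B\in\Omega^1(g_E)$. The key preliminary fact I need is that the linearization of the curvature at $D$ equals the exterior covariant derivative applied to~$B$, i.e.
\begin{gather*}
\frac{d}{dt}\bigg\vert_{t=0} R^{D^t} = d^D B \in \Omega^2(g_E).
\end{gather*}
This is a standard computation: expanding $R^{D^t}(X,Y)=D^t_X D^t_Y - D^t_Y D^t_X - D^t_{[X,Y]}$ and differentiating in $t$ yields precisely the formula for $d^D B$ using the Leibniz rule for the induced connection on $g_E$.

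Given this, the plan is to differentiate under the integral sign (justified by compactness of $M$ and smoothness of $D^t$ in~$t$) and apply the chain rule to $\sqrt{1+\|R^{D^t}\|^2}$:
\begin{gather*}
\frac{d}{dt}\bigg\vert_{t=0}{\rm YM}_{\rm BI}\big(D^t\big)
= \int_M \frac{\big\langle R^D,\, d^D B\big\rangle}{\sqrt{1+\big\Vert R^D\big\Vert^2}}\,\vartheta_g
= \int_M \left\langle \frac{R^D}{\sqrt{1+\big\Vert R^D\big\Vert^2}},\, d^D B\right\rangle \vartheta_g.
\end{gather*}
Since $\sqrt{1+\Vert R^D\Vert^2}\geq 1$, the 2-form $\psi := R^D/\sqrt{1+\Vert R^D\Vert^2}$ is smooth and lies in $\Omega^2(g_E)$, so the adjoint relation $(d^D B,\psi)=(B,\delta^D \psi)$ applies and immediately produces the stated first variation formula (using symmetry of the pointwise inner product to match the ordering $\langle B,\delta^D\psi\rangle$).

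For the Euler--Lagrange consequence, I would invoke the fundamental lemma of the calculus of variations: since $B\in\Omega^1(g_E)$ is arbitrary, vanishing of the first variation for all such $B$ forces $\delta^D\big(R^D/\sqrt{1+\Vert R^D\Vert^2}\big)=0$. The only nontrivial step is the identification $\frac{d}{dt}\vert_{t=0}R^{D^t}=d^D B$; everything else is chain rule plus one integration by parts. A minor check worth making is that $B$ having values in the skew-symmetric subbundle $g_E$ (rather than all of $\operatorname{End}(E)$) is indeed the correct space of variations among $G$-connections, which is why the resulting Euler--Lagrange equation is a condition in $\Omega^1(g_E)$ rather than in a larger space.
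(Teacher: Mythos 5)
Your proposal is correct and follows essentially the same route as the paper: linearize the curvature to get $\frac{d}{dt}\big\vert_{t=0}R^{D^t}=d^DB$ (the paper derives this from the full expansion $R^{D^t}=R^D+d^D\alpha^t+\frac12[\alpha^t\wedge\alpha^t]$, whose quadratic term does not contribute at $t=0$), then apply the chain rule under the integral and integrate by parts via the adjoint relation $\big(d^DB,\psi\big)=\big(B,\delta^D\psi\big)$. The only difference is one of detail: the paper writes out the pointwise computation of $R^{D^t}$ explicitly, whereas you cite it as standard.
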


\begin{proof} Let $D$ a $G$-connection $D\in{\cal C}(E)$ and consider a smooth curve $D^t=D+\alpha^t$ on ${\cal C}(E)$, $t\in (-\epsilon , \epsilon)$, such that $\alpha^0=0$, where $\alpha^t\in \Omega^1 (g_E)$. The corresponding curvature is given by
 \begin{gather*}
R^{D^t}=R^D + d^D\alpha^t + \tfrac12 \big[\alpha^t \wedge \alpha^t\big],
\end{gather*}
 where we define the bracket of $g_E$-valued 1 forms $\varphi$ and $ \psi $ by the formula $[\varphi \wedge \psi ](X, Y) = [\varphi(X), \psi (Y)] - [\varphi (Y), \psi (X)]$ for any vector fields $X,Y \in\Gamma(TM)$. Indeed for any vector fields $X,Y \in\Gamma(TM)$ and $u\in \Gamma(E)$ we have
 \begin{gather*}
 R^{D^t}(X,Y)(u) = D_X^t\big(D_Y^tu\big)-D_Y^t\big(D_X^tu\big)-D^t_{[X,Y]}u\\
 \hphantom{R^{D^t}(X,Y)(u)}{} = D_X^t\big(D_Yu+\alpha^t(Y)(u)\big)-D_Y^t\big(D_Xu+\alpha^t(X)(u)\big) \\
 \hphantom{R^{D^t}(X,Y)(u)=}{} -D_X^t\big(D_{[X,Y]}u+\alpha^t([X,Y])(u)\big)\\
 \hphantom{R^{D^t}(X,Y)(u)}{} = D_X\big(D_Yu+\alpha^t(Y)(u)\big)+\alpha^t(X)\big(D_Yu+\alpha^t(Y)(u)\big)\\
\hphantom{R^{D^t}(X,Y)(u)=}{} - D_Y\big(D_Xu+\alpha^t(X)(u)\big)-\alpha^t(Y)\big(D_Xu+\alpha^t(X)(u)\big)\\
\hphantom{R^{D^t}(X,Y)(u)=}{} -D_{[X,Y]}u-\alpha([X,Y])(u)\\
\hphantom{R^{D^t}(X,Y)(u)}{} = R^D(X,Y)(u)+D_X\big(\alpha^t(Y)(u)\big)-\alpha^t(Y)(D_Xu)\\
\hphantom{R^{D^t}(X,Y)(u)=}{} -\big( D_Y(\alpha^t(X)(u))-\alpha^t(X)(D_Yu)\big)-\alpha^t([X,Y])(u)\\
\hphantom{R^{D^t}(X,Y)(u)=}{} +\alpha^t(X)\big(\alpha^t(Y)(u)\big)-\alpha^t(Y)\big(\alpha^t(X)(u)\big)\\
\hphantom{R^{D^t}(X,Y)(u)}{} = R^D(X,Y)(u)+\big(D_X\big(\alpha^t(Y)\big)(u)\big)-\big(D_Y\big(\alpha^t(X)\big)(u)\big)\\
\hphantom{R^{D^t}(X,Y)(u)=}{} -\alpha^t([X,Y])(u)+\tfrac12\big[\alpha^t\wedge\alpha^t\big](X,Y)(u)\\
\hphantom{R^{D^t}(X,Y)(u)}{} = R^D(X,Y)(u)+\big(d^D\alpha^t\big)(X,Y)(u)+\tfrac12\big[\alpha^t\wedge\alpha^t\big](X,Y)(u).
 \end{gather*}

 Then we have
\begin{gather*}
\frac {d}{dt}\bigg\vert_{t=0} \Big(\sqrt{1+\big\Vert R^{D^t}\big\Vert^2}-1 \Big) =\frac{1}{\sqrt{1+\big\Vert R^D\big\Vert^2}}\frac {d}{dt}\bigg|_{t=0}\frac12\big\|R^{D^t}\big\|^2\\
\hphantom{\frac {d}{dt}\bigg\vert_{t=0} \Big(\sqrt{1+\big\Vert R^{D^t}\big\Vert^2}-1 \Big)}{}
 =\frac{1}{\sqrt{1+\big\Vert R^D\big\Vert^2}}\left\langle \frac{d}{dt}R^{D^t},R^D\right\rangle\bigg\vert_{t=0}\\
\hphantom{\frac {d}{dt}\bigg\vert_{t=0} \Big(\sqrt{1+\big\Vert R^{D^t}\big\Vert^2}-1 \Big)}{}
 =\frac{1}{\sqrt{1+\big\Vert R^D\big\Vert^2}}\big\langle d^DB,R^D\big\rangle,
\end{gather*}
where $B = \frac d{dt}\big\vert_{t=0}D^t \in \Omega^1(g_E)$.

 Thus we obtain
\begin{gather*}
\frac {d}{dt}\bigg\vert_{t=0} {\rm YM}_{\rm BI}\big(D^t\big) = \int_M\frac{1}{\sqrt{1+\big\Vert R^D\big\Vert^2}}\big\langle d^DB,R^D\big\rangle \vartheta_g \\
\hphantom{\frac {d}{dt}\bigg\vert_{t=0} {\rm YM}_{\rm BI}\big(D^t\big)}{} =\int_M \left\langle B, \delta^D\left(\frac{1}{\sqrt{1+\big\Vert R^D\big\Vert^2}}R^D\right)\right\rangle\vartheta_g.\tag*{\qed}
\end{gather*}\renewcommand{\qed}{}
\end{proof}

After deriving the Euler--Lagrange equations, we look for their solutions. We next prove an existence result for the critical points of the functional ${\rm YM}_{\rm BI}$.

\begin{Theorem}\label{theorem2}Let $(M,g)$ be an $n$-dimensional compact Riemannian manifold with $n\ge 5$, $G$~a~compact Lie group, and $E$ a smooth $G$-vector bundle over $M$. Then there exists a Riemannian metric~$\tilde g$ on $M$ in the conformal class of~$g$, and a $G$-connection $D$ on $E$ such that $D$ is a~critical point of the functional ${\rm YM}_{\rm BI}$.
 \end{Theorem}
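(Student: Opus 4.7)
My plan is to apply the direct method of the calculus of variations to a combined variational problem in the conformal factor and the $G$-connection, with the hypothesis $n\ge 5$ providing the subcritical scaling needed for compactness.

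Writing $\tilde g=u^{2}g$ for a positive smooth function $u$, the conformal identities $\|R^{D}\|_{\tilde g}^{2}=u^{-4}\|R^{D}\|_{g}^{2}$ and $\vartheta_{\tilde g}=u^{n}\vartheta_{g}$ put the functional in the form
\[
\mathcal{E}(u,D)=\int_{M}\bigl(\sqrt{1+u^{-4}\|R^{D}\|_{g}^{2}}-1\bigr)\,u^{n}\,\vartheta_{g}.
\]
I would minimize $\mathcal{E}$ over pairs $(u,D)$ with $u>0$ and $D\in\mathcal{C}(E)$, modulo the gauge group acting on $D$, subject to the normalization $\int_{M}u^{n}\vartheta_{g}=1$; without such a constraint the infimum degenerates trivially as $u\to 0$.

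A minimizing sequence $(u_{k},D_{k})$ has bounded $\mathcal{E}$-energy which, together with the unit-volume constraint, controls a weighted mixture of $L^{1}$ and $L^{2}$ norms of the curvatures $R^{D_{k}}$ against the conformal weight $u_{k}$. In dimension $n\ge 5$ the relevant Sobolev exponent in the conformal factor is subcritical, so standard embedding and concentration-compactness arguments yield strong convergence of $u_{k}$ to some $u_{0}$ with $\int_{M}u_{0}^{n}\vartheta_{g}=1$; after passing to local Coulomb (Uhlenbeck) gauges on coordinate patches, the $D_{k}$ admit a weakly convergent subsequence with limit $D_{0}$. Lower semicontinuity of the convex integrand $t\mapsto\sqrt{1+t}-1$ then makes $(u_{0},D_{0})$ a minimizer, and elliptic bootstrapping on the Euler--Lagrange system supplies the required regularity.

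At such a minimizer the variation in $D$ yields, by Theorem~\ref{theorem1} applied with the metric $\tilde g_{0}=u_{0}^{2}g$, exactly the desired equation
\[
\delta^{D_{0}}\!\left(\frac{R^{D_{0}}}{\sqrt{1+\|R^{D_{0}}\|_{\tilde g_{0}}^{2}}}\right)=0,
\]
so setting $\tilde g=\tilde g_{0}$ and $D=D_{0}$ proves the theorem; the variation in $u$ produces an additional scalar equation that is not needed for the statement. I expect the main obstacle to be the compactness step for the $D_{k}$: because the YM--BI Lagrangian grows only linearly in $\|R^{D}\|$ for large curvature, a bound on the functional yields only an $L^{1}$-type bound on $R^{D_{k}}$, which is a priori too weak for the standard gauge-fixing and weak-compactness machinery. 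Upgrading this to genuine weak compactness by exploiting the extra conformal weight $u_{k}^{n-4}$ available in $n\ge 5$, combined with a careful Uhlenbeck-type gauge fixing, is the technical heart of the argument.
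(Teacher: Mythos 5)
Your proposal takes a genuinely different route from the paper (direct minimization of a joint functional in the conformal factor and the connection), but it has a fatal gap, and in fact two. First, the constrained minimization problem you set up is degenerate: with the normalization $\int_M u^n\vartheta_g=1$, the infimum of $\mathcal{E}(u,D)$ is zero and is generically not attained. Indeed, using $\sqrt{1+t}-1\le \min\bigl(\tfrac t2,\sqrt t\bigr)$, the integrand is bounded by $\min\bigl(\tfrac12 u^{n-4}\Vert R^D\Vert_g^2,\, u^{n-2}\Vert R^D\Vert_g\bigr)$; taking $u\approx r^{-1}$ on a ball of radius $r$ (to satisfy the volume constraint) and $u\approx\varepsilon$ elsewhere gives $\mathcal{E}\lesssim r^4\Vert R^D\Vert_\infty^2+\varepsilon^{\,n-2}\Vert R^D\Vert_{L^1}\to 0$, so any minimizing sequence concentrates the conformal factor and converges to nothing useful. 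Second, even setting that aside, the compactness step you honestly flag as "the technical heart" is precisely the step that fails: a bound on a Lagrangian with linear growth in $\Vert R^D\Vert$ gives only an $L^1$-type curvature bound, whereas Uhlenbeck's gauge-fixing and weak compactness require $L^p$ bounds with $p>n/2$. The conformal weight does not rescue this, since it is itself part of the degenerating minimizing sequence. An identified obstruction is not a proof, and here the obstruction is real.

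The paper avoids all of this analysis by an essentially pointwise, two-step conformal trick. Step 1: minimize instead the functional $F_p(D)=\tfrac12\int_M\bigl(1+\Vert R^D\Vert_g^2\bigr)^{(p-2)/2}\vartheta_g$ with $2p>n$, which by Uhlenbeck does satisfy the Palais--Smale condition and attains its minimum; its Euler--Lagrange equation is $\delta^D_g\bigl(\bigl(1+\Vert R^D\Vert_g^2\bigr)^{(p-2)/2}R^D\bigr)=0$, and since for $2$-forms in dimension $n$ one has $\delta^D_{fg}(\omega)=0$ iff $\delta^D_g\bigl(f^{(n-4)/2}\omega\bigr)=0$, the conformal change $\overline g=\bigl(1+\Vert R^D\Vert_g^2\bigr)^{(p-2)/(n-4)}g$ turns this critical connection into a genuine Yang--Mills connection. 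Step 2: starting from $\delta^D_g R^D=0$, a second conformal change $\tilde g=\sigma^{-1}g$ converts the Yang--Mills equation into the ${\rm YM}_{\rm BI}$ equation provided $\sigma$ solves the pointwise algebraic equation $\sigma^{(n-4)/2}=\bigl(1+\sigma^2\Vert R^D\Vert_g^2\bigr)^{-1/2}$, which is solved by inverting an explicit monotone function of one variable. This is where $n\ge 5$ enters (the exponent $(n-4)/2$ must be positive for the conformal weight to be nontrivial and for the functional equation to be solvable), not through any Sobolev subcriticality. If you want to salvage a variational approach, you would need to route the compactness through an $L^p$ curvature functional as the paper does, rather than through the degenerate ${\rm YM}_{\rm BI}$ energy itself.
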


\begin{proof} We prove the theorem in two steps.

{\bf Step 1.} Consider the functional $F_p\colon {\cal C}(E) \rightarrow \mathbb{R}$, defined by
\begin{gather*}
F_p(D)=\frac12\int_M\big(1+\big\Vert R^D\big\Vert^2_g\big)^{(p-2)/2}\vartheta_g.
\end{gather*}
By \cite{Uh} this functional satisfies the Palais--Smale conditions and attains the minimum if $2p>n$. The Euler--Lagrange equation associated to $F_p(D)$ is
\begin{gather*}
\delta^D_g\big( \big(1+\big\Vert R^D\big\Vert^2_g\big)^{(p-2)/2}R^D\big)=0.
\end{gather*}
This equation has a solution $D$ for $2p>n$. Define the function $f\colon M\rightarrow \mathbb{R}$ by $f=\big(1+\big\Vert R^D\big\Vert^2_g\big)^{(p-2)/n-4}$ and the metric $\overline{g}=fg$, conformally equivalent to $g$. As $\delta^D_g\left( f^{(n-4)/2}R^D\right)=0$, it is easy to see that $\delta^D_{\overline g}\big( R^D\big)=0$. Hence there exists a Riemannian metric $\overline g$ on $M$, conformaly equivalent to $g$, and a $G$-connection $D$ on $E$ such that $D$ is a Yang--Mills connection with respect to $\overline{g}$.

{\bf Step 2.} Now we look for a ``good'' function $\sigma$ such that $\tilde{g}=\sigma^{-1}g$. Taking into account the first step, we can start with an Yang--Mills connection~$D$ with respect to the metric~$g$. It is clear that
\begin{gather*}\delta^D_gR^D=0 \qquad \mbox{if and only if} \quad \delta_{\tilde g}^D\big(\sigma^{\frac{n-4}{2}}R^D\big)=0,
\end{gather*}
for any $G$-connection.

The function $\sigma$ is good if it satisfies the following functional equation
\begin{gather*}
\sigma^{\frac{n-4}{2}}=\frac{1}{\sqrt{1+\sigma^2\big\Vert R^D\big\Vert_g^2}} \left(=\frac{1}{\sqrt{1+\big\Vert R^D\big\Vert_{\tilde{g}}^2}}\right).
\end{gather*}
So, what we have to do next is to solve the above functional equation.

Let $h\colon [0,\infty)\rightarrow [0,\infty)$ given by $h(t)=\sqrt{1+2t}-1$. It is clear that its derivative is a strictly decreasing function and let $H\colon (0,1]\rightarrow [0,\infty)$ be its smooth inverse. Consider the smooth function $F \colon (0,1]\rightarrow [0,\infty)$ given by
\begin{gather*}
F(y) = \frac {H\big(y^{(n-4)/2}\big)}{y^2}.
\end{gather*}
It is not difficult to prove that $F$ is invertible. Denote by $\Phi\colon [0,\infty)\rightarrow (0,1]$ the smooth inverse of $F$. We define the positive smooth function $\sigma$ by
\begin{gather*}
\sigma= \Phi \big(\tfrac12\big\|R^D\big\|^2_g\big).
\end{gather*}
We then have
\begin{gather*}
0 = \delta^D_{\tilde g}\big(\sigma^{(n-4)/2}R^D\big)= \delta^D_{\tilde g}\Big(\big(\Phi \big(\tfrac12\big\|R^D\big\|^2_g\big)\big)^{(n-4)/2}R^D\Big)\\
\hphantom{0}{} =\delta^D_{\tilde g}\left(\frac{1}{\sqrt{1+\sigma^2\big\Vert R^D\big\Vert_g^2}}R^D\right)=\delta^D_{\tilde g}\left(\frac{1}{\sqrt{1+\big\Vert R^D\big\Vert_{\tilde{g}}^2}}R^D\right),
\end{gather*}
which proves that the Yang--Mills connection $D$ is also a critical point of the functional ${\rm YM}_{\rm BI}$ with respect to the metric $\tilde g$.
\end{proof}

\begin{Remark} The condition $n\ge 5$ is crucial in the previous proof because the Euler--Lagrange equations are conformally invariant in dimension $n=4$.
\end{Remark}

\section{The stress-energy tensor. Conservation law}\label{section3}

Motivated by Feynman's ideas on stationary electromagnetic field, in 1982 Baird and Eells introduced the stress-energy tensor associated to any smooth map $f\colon (M,g)\rightarrow (N,h)$ between two Riemannian manifolds, The stress-energy tensor is $S_f:=e(f)g-f^*h$, where $e(f)$ is the energy density of $f$. In the same spirit, to any $G$-connection $D$ one associates an analouguous $2$-tensor (related to the Yang--Mills--Born--Infeld functional) by (see also~\cite{Do}){\samepage
\begin{gather*}
S_D=\Big(\sqrt{1+\big\Vert R^D \big\Vert^2}-1\Big)g-\frac{1}{\sqrt{1+\big\Vert R^D \big\Vert^2}}R^D\odot R^D,
\end{gather*}
where $R^D\odot R^D$ is the symmetric product defined by $R^D\odot R^D=\big\langle i_XR^D,i_YR^D\big\rangle$.}

It is natural to look for the geometric interpretation of this tensor. There exists a variational interpretation which we shall explain in the following. Consider the following functional
\begin{gather*}
{\cal E}_D(g)=\int_M \Big(\sqrt{1+\big\Vert R^D \big\Vert^2}-1\Big)\vartheta_g.
\end{gather*}
The difference between this functional and ${\rm YM}_{\rm BI}$ is that ${\cal E}_D$ is defined on the space of smooth Riemannian metrics on the base manifold~$M$ and the connection~$D$ is fixed. In order to compute the rate of change of ${\cal E}_D(g)$ when the metric on the base manifold is changed, we consider a~smooth family of metrics~$g_s$ with $s\in (-\varepsilon, +\varepsilon)$, such that $g_0=g$. The ``tangent'' vector at $g$ to the curve of metrics~$g_s$ is denoted by $\delta g=\frac{d g_s}{d s}\big\vert_{s=0}$ and can be viewed as a smooth $2$-covariant symmetric tensor field on~$M$. Using the formulae obtained by Baird (see \cite{Ba})
\begin{gather*}
\frac{d\big\Vert R^D\big\Vert_{g_s}}{ds}\bigg\vert_{s=0}=-\big\langle R^D\odot R^D,\delta g\big\rangle,
\end{gather*}
and
\begin{gather*}
\frac{d}{ds}\vartheta_{g_s}\bigg\vert_{s=0}=\frac12\langle g,\delta g\rangle\vartheta_g
\end{gather*}
we obtain
\begin{gather*}
\frac{d{\cal E}_D(g_s)}{ds}\bigg\vert_{s=0}=\int_M\frac{1}{\sqrt{1+\big\Vert R^D \big\Vert^2}}\frac{d}{ds}\left(\frac12\big\Vert R^D \big\Vert^2\right)\bigg\vert_{s=0}\vartheta_g\\
\hphantom{\frac{d{\cal E}_D(g_s)}{ds}\bigg\vert_{s=0}=}{}
+\int_M \Big(\sqrt{1+\big\Vert R^D \big\Vert^2}-1\Big)\frac{d}{ds}\vartheta_{g_s}\bigg\vert_{s=0}\\
\hphantom{\frac{d{\cal E}_D(g_s)}{ds}\bigg\vert_{s=0}}{}
=\frac12\int_M\left\langle \Big(\sqrt{1+\big\Vert R^D \big\Vert^2}-1\Big)g-\frac{1}{\sqrt{1+\big\Vert R^D \big\Vert^2}}R^D\odot R^D,\delta g\right\rangle\vartheta_g\\
\hphantom{\frac{d{\cal E}_D(g_s)}{ds}\bigg\vert_{s=0}}{}
=\frac12\int_M\langle S_D,g\rangle\vartheta_g.
\end{gather*}

Recall now

\begin{Definition}A $G$-connection $D$ is said to satisfy a conservation law if $S_D$ is divergence free.
\end{Definition}

Concerning this notion we obtain the following result (see \cite{Do} for the general case of $F$-Yang--Mills fields).

\begin{Proposition}Any critical point of the functional ${\rm YM}_{\rm BI}$ is conservative.
\end{Proposition}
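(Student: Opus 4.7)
The plan is to compute $\operatorname{div} S_D$ directly in a normal orthonormal frame and check that it vanishes as a consequence of the second Bianchi identity $d^D R^D = 0$ together with the Euler--Lagrange equation $\delta^D(\rho^{-1}R^D)=0$, where I abbreviate $\rho := \sqrt{1+\|R^D\|^2}$.

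Fix a point $p\in M$, choose an orthonormal frame $\{e_i\}$ with $(\nabla_{e_j}e_i)(p)=0$, and let $Y$ be a vector field with $(\nabla Y)(p)=0$. Then $(\operatorname{div} S_D)(Y) = \sum_i (\nabla_{e_i}S_D)(e_i,Y)$. The first term of $S_D$ contributes $Y(\rho) = \rho^{-1}\langle \nabla_Y R^D, R^D\rangle$. For the second term I would split via Leibniz:
\begin{gather*}
\sum_i \nabla_{e_i}\bigl(\rho^{-1}R^D\odot R^D\bigr)(e_i,Y) = \sum_i e_i(\rho^{-1})(R^D\odot R^D)(e_i,Y) + \rho^{-1}\sum_i \nabla_{e_i}(R^D\odot R^D)(e_i,Y).
\end{gather*}

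The key computation is the last sum. Writing $(R^D\odot R^D)(e_i,Y)=\sum_j \langle R^D(e_i,e_j), R^D(Y,e_j)\rangle$ and differentiating, one piece gives $-\langle \delta^D R^D, i_Y R^D\rangle$ (since $(\delta^D R^D)(Z)=-\sum_i(\nabla_{e_i}R^D)(e_i,Z)$). The other piece is $\sum_{i,j}\langle R^D(e_i,e_j),(\nabla_{e_i}R^D)(Y,e_j)\rangle$; here I would substitute the second Bianchi identity $(\nabla_{e_i}R^D)(Y,e_j) = (\nabla_Y R^D)(e_i,e_j) + (\nabla_{e_j}R^D)(Y,e_i)$. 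The first summand of the substitution reassembles into $2\langle \nabla_Y R^D, R^D\rangle$, while the second is antisymmetric under $i\leftrightarrow j$ and hence equals the negative of the original sum. Solving yields $\sum_{i,j}\langle R^D(e_i,e_j),(\nabla_{e_i}R^D)(Y,e_j)\rangle = \langle \nabla_Y R^D, R^D\rangle$.

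Putting the pieces together I obtain
\begin{gather*}
(\operatorname{div} S_D)(Y) = \rho^{-1}\langle \nabla_Y R^D,R^D\rangle - \sum_i e_i(\rho^{-1})(R^D\odot R^D)(e_i,Y) + \rho^{-1}\langle \delta^D R^D, i_Y R^D\rangle - \rho^{-1}\langle \nabla_Y R^D,R^D\rangle.
\end{gather*}
The two $\langle \nabla_Y R^D,R^D\rangle$ terms cancel. To finish, expand the Euler--Lagrange equation as $\sum_i e_i(\rho^{-1})R^D(e_i,Z)=\rho^{-1}(\delta^D R^D)(Z)$ for every $Z$; contracting against $R^D(Y,e_j)$ and summing in $j$ shows that the remaining two terms are equal and cancel. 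Hence $\operatorname{div} S_D=0$.

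The main obstacle will be the bookkeeping in the Bianchi-identity step: one must be careful with the antisymmetries of $R^D$ and the index swap to see that exactly half of the substituted expression survives, yielding the $\langle\nabla_Y R^D, R^D\rangle$ term that cancels the contribution from $Y(\rho)$. Once that identity is in hand, the cancellation against the $e_i(\rho^{-1})$ piece via the Euler--Lagrange equation is automatic, and no dimensional restriction is needed.
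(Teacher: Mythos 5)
Your argument is correct and, at bottom, it is the same proof as the paper's: both reduce the statement to a pointwise divergence formula for $S_D$ and then kill the two resulting terms with the Bianchi identity $d^DR^D=0$ and the Euler--Lagrange equation $\delta^D\big(\rho^{-1}R^D\big)=0$. The difference is that the paper simply quotes the divergence formula
\begin{gather*}
\operatorname{div} S_D(X)=\Big\langle \rho^{-1}\delta^D R^D-i_{\operatorname{grad}(\rho^{-1})}R^D,\,i_XR^D\Big\rangle+\rho^{-1}\big\langle i_Xd^DR^D,R^D\big\rangle
\end{gather*}
from the reference [Do] and stops, whereas you derive it from scratch in a normal frame; your computation is a correct proof of that cited formula. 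Indeed, your term $\sum_i e_i(\rho^{-1})(R^D\odot R^D)(e_i,Y)$ is exactly $\big\langle i_{\operatorname{grad}(\rho^{-1})}R^D, i_YR^D\big\rangle$, and the combination $\rho^{-1}\langle\nabla_YR^D,R^D\rangle$ minus the Bianchi-substituted piece is (up to the normalization factor) $\rho^{-1}\big\langle i_Yd^DR^D,R^D\big\rangle$, so your intermediate expression reproduces the paper's formula before you invoke $d^DR^D=0$. The index-swap step and the contraction of the expanded Euler--Lagrange equation against $i_YR^D$ are both handled correctly, and you are right that no dimensional restriction enters. The only thing to watch is consistency of sign conventions for $\delta^D$ and of the factor of $2$ coming from summing over ordered versus unordered pairs $(i,j)$, but your bookkeeping is internally consistent and the final cancellation is insensitive to the overall sign of $\delta^D$. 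So your proposal is a self-contained version of the paper's proof rather than a genuinely different route.
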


\begin{proof}The following formula for the divergence of the stress-energy tensor is true (see \cite{Do})
\begin{gather*}
\operatorname{div} S_D(X)=\left\langle \frac{1}{\sqrt{1+\big\Vert R^D \big\Vert^2}}\delta^D R^D-i_{\operatorname{grad}\big(\frac{1}{\sqrt{1+\Vert R^D \Vert^2}}\big)}R^D,i_XR^D\right\rangle\\
\hphantom{\operatorname{div} S_D(X)=}{} +\frac{1}{\sqrt{1+\big\Vert R^D \big\Vert^2}}\big\langle i_Xd^DR^D,R^D\big\rangle,
\end{gather*}
for any vector field $X$ on $M$. Using the Bianchi identity and the Euler--Lagrange equation of the functional ${\rm YM}_{\rm BI}$, we derive $\operatorname{div}S_D=0$.
\end{proof}

\section{The second variation formula}\label{section4}

In this section we obtain the second variation formula of the functional ${\rm YM}_{\rm BI}$. Let $(M,g)$ be an $n$-dimensional compact Riemannian manifold, $G$ a compact Lie group and $E$ a $G$-vector bundle over $M$. Let $D$ be a critical point of the functional ${\rm YM}_{\rm BI}$ and $D^t$ a smooth curve on ${\cal C}(E)$ such that $D^t=D+\alpha^t$, where $\alpha^t\in \Omega^1(g_E)$ for all $t\in (-\varepsilon,\varepsilon)$, and $\alpha^0=0$. The infinitesimal variation of the connection associated to $D^t$ at $t=0$ is
\begin{gather*}
B:=\frac{d\alpha^t}{dt}\bigg\vert_{t=0}\in \Omega (g_E).
\end{gather*}
According to \cite{BL}, we define the endomorphism ${\cal R}^D$ of $\Omega^1 (g_E)$ by
\begin{gather*}
{\cal R}^D(\varphi)(X):=\sum_{i=1}^n\big[R^D(e_i,X),\varphi (e_i)\big],
\end{gather*}
for $\varphi \in \Omega (g_E)$ and $X\in \Gamma(TM)$, where $\{e_i\}_{i=1}^n$ is a local orthonormal frame field on $(M,g)$. With these notations we have

\begin{Theorem}Let $(M,g)$ be an $n$-dimensional compact Riemannian manifold, $G$ a~compact Lie group and $E$ a $G$-vector bundle over~$M$. Let~$D$ be a critical point of ${\rm YM}_{\rm BI}$. The second variation of the functional ${\rm YM}_{\rm BI}$ is given by
\begin{gather*}
\frac {d^2}{dt^2}\bigg\vert_{t=0} {\rm YM}_{\rm BI}\big(D^t\big) = -\int_M\frac{1}{\big(1+\big\Vert R^D \big\Vert^2\big)^{3/2}}\big\langle d^DB,R^D\big\rangle^2 \vartheta_g\\
\hphantom{\frac {d^2}{dt^2}\bigg\vert_{t=0} {\rm YM}_{\rm BI}\big(D^t\big) =}{} +\int_M\frac{1}{\sqrt{1+\big\Vert R^D \big\Vert^2}} \big( \big\langle d^DB,d^DB\big\rangle+\big\langle B, {\cal R}^D(B)\big\rangle\big)\vartheta_g \\
\hphantom{\frac {d^2}{dt^2}\bigg\vert_{t=0} {\rm YM}_{\rm BI}\big(D^t\big)}{}
=\int_M \big\langle B,{\cal S}^D(B)\big\rangle \vartheta_g,
\end{gather*}
where ${\cal S}^D$ is a differential operator acting on $\Omega (g_E)$ defined by
\begin{gather*}
{\cal S}^D(B)=-\delta^D\left( \frac{1}{\big(1+\big\Vert R^D \big\Vert^2\big)^{3/2}}\big\langle d^DB,R^D\big\rangle^2 \right)\\
\hphantom{{\cal S}^D(B)=}{} + \delta^D\left( \frac{1}{\sqrt{1+\big\Vert R^D \big\Vert^2}}d^DB\right)++\frac{1}{\sqrt{1+\big\Vert R^D \big\Vert^2}}{\cal R}^D(B).
\end{gather*}
\end{Theorem}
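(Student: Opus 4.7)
The plan is to differentiate the first variation formula one more time in $t$ and reorganise the resulting terms. Starting from the intermediate expression computed in the proof of Theorem~\ref{theorem1},
\begin{gather*}
\frac{d}{dt}{\rm YM}_{\rm BI}\big(D^t\big) = \int_M \frac{1}{\sqrt{1+\big\Vert R^{D^t}\big\Vert^2}}\left\langle \frac{d}{dt}R^{D^t},R^{D^t}\right\rangle \vartheta_g,
\end{gather*}
I would apply $\frac{d}{dt}\big\vert_{t=0}$ term by term. The Leibniz rule produces three contributions: differentiating the factor $(1+\Vert R^{D^t}\Vert^2)^{-1/2}$ yields $-\langle d^D B,R^D\rangle^2/(1+\Vert R^D\Vert^2)^{3/2}$; differentiating the first slot of the inner product yields $\langle \frac{d^2}{dt^2}R^{D^t}\big\vert_{t=0},R^D\rangle/\sqrt{1+\Vert R^D\Vert^2}$; differentiating the second slot yields $\langle d^D B,d^D B\rangle/\sqrt{1+\Vert R^D\Vert^2}$, where I use $\frac{d}{dt}R^{D^t}\big\vert_{t=0}=d^D B$ from Theorem~\ref{theorem1}.

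Next I would compute the second derivative of the curvature. From $R^{D^t}=R^D+d^D\alpha^t+\frac12[\alpha^t\wedge\alpha^t]$ one obtains $\frac{d^2}{dt^2}R^{D^t}\big\vert_{t=0}=d^D\ddot\alpha^0+[B\wedge B]$. The $d^D\ddot\alpha^0$ part, after integration by parts against $R^D/\sqrt{1+\Vert R^D\Vert^2}$, becomes $\big\langle \ddot\alpha^0,\delta^D\big(R^D/\sqrt{1+\Vert R^D\Vert^2}\big)\big\rangle$, which vanishes by the Euler--Lagrange equation of Theorem~\ref{theorem1} since $D$ is critical. For the bracket contribution I would invoke the pointwise identity $\langle[B\wedge B],R^D\rangle=\langle B,{\cal R}^D(B)\rangle$, which is the standard Bourguignon--Lawson identity recalled in \cite{BL}. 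Assembling these ingredients already yields the first displayed expression for the second variation.

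To obtain the second form involving ${\cal S}^D$, I would integrate by parts each of the three $B$-bilinear integrands to factor out a $B$. Rewriting
\begin{gather*}
\frac{\langle d^D B,R^D\rangle^2}{(1+\Vert R^D\Vert^2)^{3/2}} = \left\langle d^D B,\frac{\langle d^D B,R^D\rangle}{(1+\Vert R^D\Vert^2)^{3/2}}R^D\right\rangle
\end{gather*}
lets me apply $\delta^D$ and produce the first term of ${\cal S}^D(B)$; analogously the $\langle d^D B,d^D B\rangle$-integrand yields $\delta^D\big(d^D B/\sqrt{1+\Vert R^D\Vert^2}\big)$; the ${\cal R}^D$-term is already in the desired form.

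The main obstacle is the bookkeeping in the second step. One must verify that the ``acceleration'' contribution $d^D\ddot\alpha^0$ really drops out, which crucially uses the critical-point hypothesis, and one must match conventions so that the bracket-squared contribution is correctly identified as $\langle B,{\cal R}^D(B)\rangle$ given the antisymmetrisation built into $[\,\cdot\,\wedge\,\cdot\,]$. Beyond that, the computation runs parallel to the classical Bourguignon--Lawson second-variation formula for pure Yang--Mills, with the extra scalar weight $(1+\Vert R^D\Vert^2)^{-1/2}$ supplying, through the chain rule, the additional $\langle d^D B,R^D\rangle^2$ term that distinguishes the Born--Infeld-type functional from the pure Yang--Mills case.
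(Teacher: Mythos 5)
Your proposal is correct and follows essentially the same route as the paper: differentiate the intermediate first-variation integrand once more, split off the $(1+\Vert R^D\Vert^2)^{-3/2}\langle d^DB,R^D\rangle^2$ term from the chain rule, kill the acceleration term $d^D\ddot\alpha^0$ via the Euler--Lagrange equation, and identify $\langle[B\wedge B],R^D\rangle=\langle B,{\cal R}^D(B)\rangle$. Your explicit rewriting of the first term as $\big\langle d^DB,\,(1+\Vert R^D\Vert^2)^{-3/2}\langle d^DB,R^D\rangle R^D\big\rangle$ before applying $\delta^D$ is in fact the correct reading of the first summand of ${\cal S}^D$, a step the paper leaves implicit.
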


\begin{proof}As $R^{D^t}=R^D + d^D\alpha^t + \frac12 \big[\alpha^t \wedge \alpha^t\big]$ and $\alpha^0=0$ we obtain that
\begin{gather*}
\frac {d^2}{dt^2}\bigg\vert_{t=0} \left(\frac 12\big\|R^{D^t}\big\|^2\right) =\big\langle d^DC+[B,B], R^D\big\rangle+\big\langle d^DB,d^DB\big\rangle,
\end{gather*}
where $C:=\frac{d^2}{dt^2}\big\vert_{t=0}\alpha^t$. Thus we obtain
\begin{gather*}
\frac {d^2}{dt^2}\bigg\vert_{t=0}{\rm YM}_{\rm BI}\big(D^t\big) =\frac {d}{dt}\bigg\vert_{t=0} \int_M\frac12\frac{1}{\sqrt{1+\big\Vert R^D \big\Vert^2}}\frac {d}{dt}\big\|R^{D^t}\big\|^2 \vartheta_g\\
\hphantom{\frac {d^2}{dt^2}\bigg\vert_{t=0}{\rm YM}_{\rm BI}\big(D^t\big)}{}
 = -\frac14 \int_M\frac{1}{\big(1+\big\Vert R^D \big\Vert^2\big)^{3/2}}\left(\frac {d}{dt}\bigg \vert_{t=0}\|R^{D^t}\|^2\right)^2\vartheta_g\\
\hphantom{\frac {d^2}{dt^2}\bigg\vert_{t=0}{\rm YM}_{\rm BI}\big(D^t\big)=}{}
 +\frac12 \int_M\frac{1}{\sqrt{1+\big\Vert R^D \big\Vert^2}}\frac {d^2}{dt^2}\bigg\vert_{t=0}\big\|R^{D^t}\big\|^2 \vartheta_g \\
\hphantom{\frac {d^2}{dt^2}\bigg\vert_{t=0}{\rm YM}_{\rm BI}\big(D^t\big)}{}
=-\int_M\frac{1}{\big(1+\big\Vert R^D \big\Vert^2\big)^{3/2}}\big\langle d^DB,R^D\big\rangle ^2 \vartheta_g\\
\hphantom{\frac {d^2}{dt^2}\bigg\vert_{t=0}{\rm YM}_{\rm BI}\big(D^t\big)=}{}
+\int_M\frac{1}{\sqrt{1+\big\Vert R^D \big\Vert^2}}\big(\big\langle d^DC+[B,B], R^D\big\rangle+\big\langle d^DB,d^DB\big\rangle \big) \vartheta_g.
 \end{gather*}
On the other hand, since $D$ is a critical point of the functional ${\rm YM}_{\rm BI}$, we have
 \begin{gather*}
\int_M\frac{1}{\sqrt{1+\big\Vert R^D \big\Vert^2}}\big\langle d^DC, R^D\big\rangle \vartheta_g=\int_M\left<C,\delta^D\left(\frac{1}{\sqrt{1+\big\Vert R^D \big\Vert^2}}R^D\right)\right>\vartheta_g=0.
 \end{gather*}
 Finally, one can prove that
 \begin{gather*}
\big\langle [B\wedge B],R^D\big\rangle=\big\langle B,{\cal R}^D(B)\big\rangle.
\end{gather*}
Indeed
\begin{gather*}
\big\langle [B\wedge B],R^D\big\rangle =\sum_{i<j}\big\langle [B\wedge B](e_i,e_j),R^D(e_i,e_j)\big\rangle\\
\hphantom{\big\langle [B\wedge B],R^D\big\rangle}{} =\sum_{i<j}\big\langle [B(e_i),B(e_j)]-[B(e_j),B(e_i)],R^D(e_i,e_j)\big\rangle\\
\hphantom{\big\langle [B\wedge B],R^D\big\rangle}{} =2\sum_{i<j}\big\langle [B(e_i),B(e_j)],R^D(e_i,e_j)\big\rangle =\sum_{i,j=1}^n\big\langle B(e_i),[B(e_j),R^D(e_i,e_j)]\big\rangle\\
\hphantom{\big\langle [B\wedge B],R^D\big\rangle}{} =\sum_{i=1}^n\big\langle B(e_i),{\cal R}^D(e_i)\big\rangle=\big\langle B,{\cal R}^D(B)\big\rangle,
\end{gather*}
and thus we obtain the second variation formula.
\end{proof}

The index, nullity and stability of a critical point of ${\rm YM}_{\rm BI}$ can be defined in the same way as in the case of Yang--Mills connection (see~\cite{BL}) but is rather difficult to analyse them because the form of ${\cal S}^D$ is much more complicated compared with the case of Yang--Mills connections.

\subsection*{Acknowledgements}

The author thank the referees for very carefully reading a first version of the paper and for their useful suggestions. This work is partially supported by a Grant of Ministry of Research and Innovation, CNCS - UEFISCDI, Project Number PN-III-P4-ID-PCE-2016-0065, within PNCDI~III.

\pdfbookmark[1]{References}{ref}
\LastPageEnding

\end{document}